\newtheorem{theorem}{Theorem}
\newtheorem{lemma}{Lemma}
\newenvironment{proof}{\noindent \emph{Proof. }}{\hfill \hbox{\rlap{$\sqcap$}$\sqcup$}\\}
\title{Shield Tilings}
\author{Thomas Fernique
\and
Olga Mikhailovna Sizova
}
\date{}
\begin{document}
\maketitle

\begin{abstract}
We provide a complete description of the edge-to-edge tilings with a regular triangle and a shield-shaped hexagon with no right angle.
The case of a hexagon with a right angle is also briefly discussed.
\end{abstract}

\section{Introduction}

Given a finite set of polygons called {\em tiles}, a {\em tiling} is a covering of the Euclidean plane by interior disjoint isometric copies of these polygons, with the property that the intersection of two polygons, if not empty, is either a vertex or an entire edge (so-called {\em edge-to-edge} condition).

A common issue, in particular in statistical mechanics, is to describe all the possible tilings for a given set of tiles.
This problem has been extensively studied in the case of $2\times 1$ rectangles called {\em dominoes} (see e.g. \cite{CKP01}) as well as in the case of a square and a triangle (see, e.g. \cite{ICJKS21,OH93}).

Here we focus on {\em shield tilings}, that are tilings by a unit regular triangle and a {\em shield}, defined as a hexagon with unit edges whose angles take two values $\alpha\in(\tfrac{\pi}{3},\tfrac{2\pi}{3})$ and $\beta=\tfrac{4\pi}{3}-\alpha$ that alternate when we go through the angles circularly (Fig.~\ref{fig:shield_tile}).

\begin{figure}[hbt]
\centering
\includegraphics[width=0.8\textwidth]{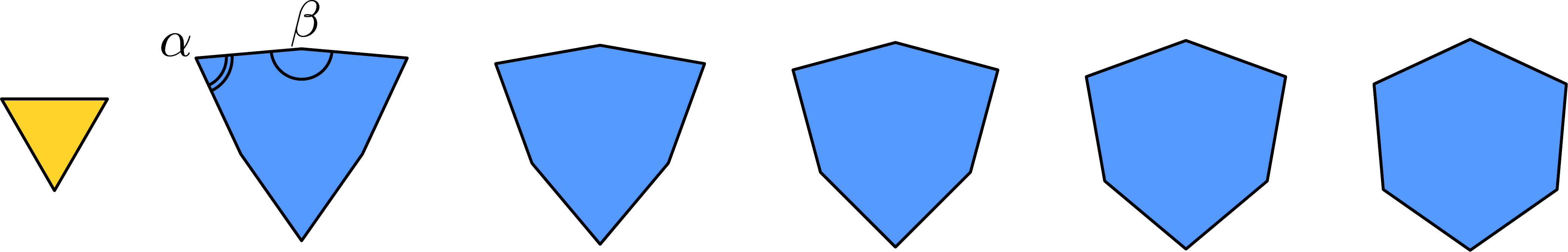}
\caption{The triangle tile and shield tiles for different values of $\alpha$.}
\label{fig:shield_tile}
\end{figure}

The motivation comes from a problem of classification of {\em disk packings}, that are a sets of interior-disjoint disks in the Euclidean plane.
More precisely, a disk packing is said to be {\em triangulated} if its {\em contact graph}, that is, the graph which connects the centers of adjacent disks, is a triangulation.
In \cite{Ken06}, it was proven that there are only $9$ values $r<1$ that allow a triangulated packing of disks of size $1$ and $r$, and a description of the possible packings was also provided, except in the case $r\approx 0.54$ root of
$$
r^8 - 8r^7 - 44r^6 - 232r^5 - 482r^4 - 24r^3 + 388r^2 - 120r + 9.
$$
In this case, the possible packings turned out to correspond to shield tilings for $\alpha\approx 99.34^\circ$ (Fig.~\ref{fig:shield_packing}) and only two examples of packings were provided.
Further similar cases with various values of $\alpha$ appear in the classification of triangulated packings by three sizes of disks \cite{FHS21}.

\begin{figure}[hbt]
\centering
\includegraphics[width=0.4\textwidth]{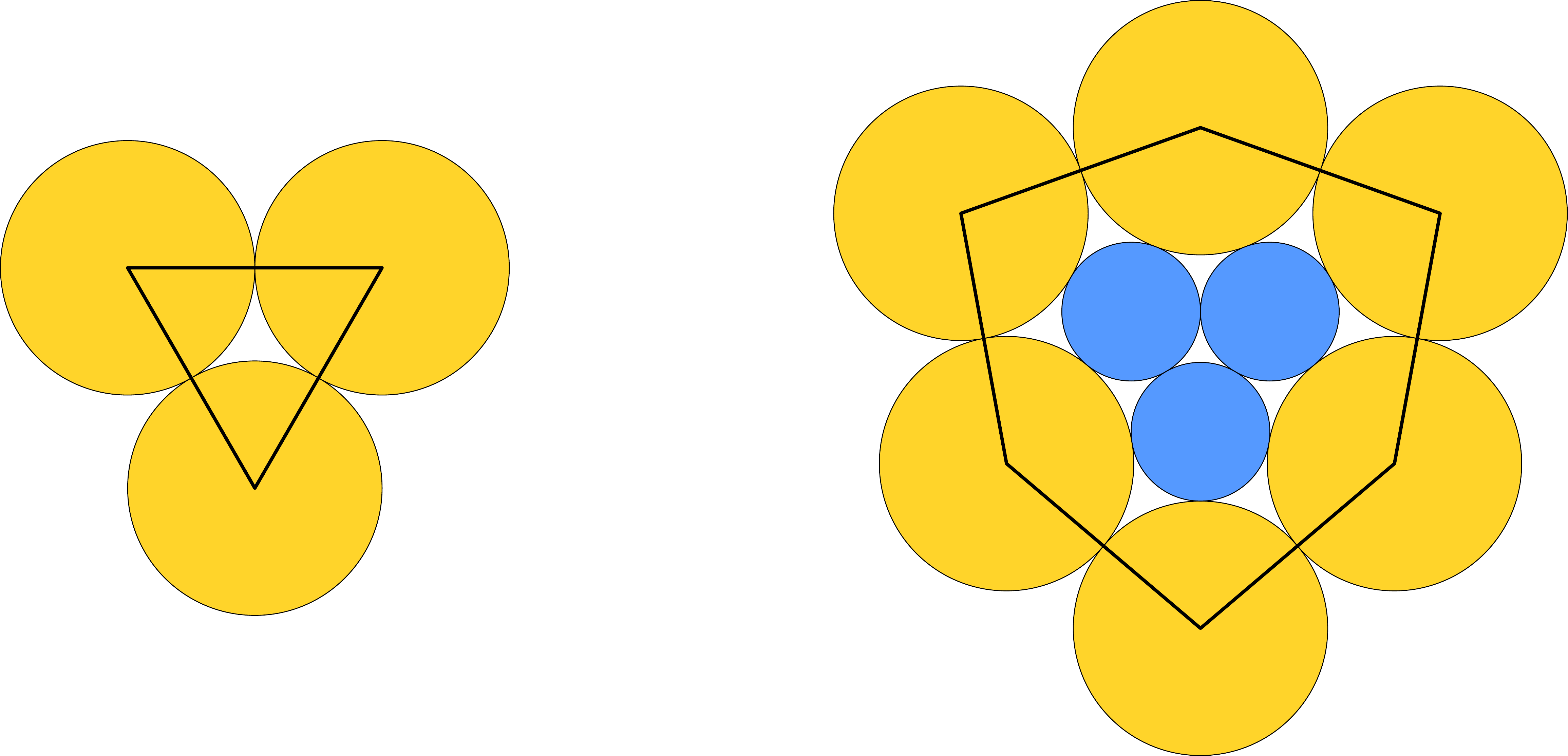}
\caption{
Two disks whose triangulated packings can be seen as shield tilings.
}
\label{fig:shield_packing}
\end{figure}

Although some shield tiles can be found in the literature (see e.g. \cite{Gah09}), the classification established here is, to our best knowledge, new.
Namely, we introduce in Section~\ref{sec:shield_tilings} two specific classes of shield tilings, called shield line tilings and shield triangle tilings, and prove in Section \ref{sec:main} that, for $\alpha\neq\tfrac{\pi}{2}$, this covers all possible shield tilings:

\begin{theorem}
\label{th:main}
For $\alpha\neq\tfrac{\pi}{2}$, every shield tiling is either a shield line tiling or a shield triangle tiling.
\end{theorem}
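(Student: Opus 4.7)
The plan is to combine an exhaustive enumeration of vertex stars with edge-local rules and a propagation argument that turns these rules into global structure.

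\emph{Step 1: vertex stars.} At every vertex the meeting angles sum to $2\pi$. If $a$ triangle corners, $b$ corners of angle $\alpha$ and $c$ corners of angle $\beta=\tfrac{4\pi}{3}-\alpha$ meet there, the equation $a\tfrac{\pi}{3}+b\alpha+c\beta=2\pi$ simplifies to
$$a+4c+3(b-c)\,\alpha/\pi=6.$$
For $\alpha/\pi$ irrational this forces $b=c$ and $a+4c=6$, giving only the vertex types $(a,b,c)\in\{(6,0,0),(2,1,1)\}$. For the finitely many rational values of $\alpha/\pi$ in $(\tfrac{1}{3},\tfrac{2}{3})\setminus\{\tfrac{1}{2}\}$ that admit extra integer solutions, I would treat each one separately and show that the extra vertex types either fail to extend to a full tiling or appear only inside patches already classified by the generic analysis.

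\emph{Step 2: local patches around a shield.} The corners of a shield alternate $\alpha,\beta,\alpha,\beta,\alpha,\beta$ around its boundary. The $(2,1,1)$ vertex type pairs an $\alpha$ corner of one shield with a $\beta$ corner of another, sandwiched between two triangle corners; enumerating the rotational positions of the two shields at such a vertex yields a short list of local patches around each shield corner. I would translate these patches into rules stating, for every edge of a shield, which tile may lie on the other side and in which orientation. This reduces the problem to a finite matching problem on the shields alone, with triangles filling in the remaining gaps automatically.

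\emph{Step 3 --- the main obstacle: propagation.} Starting from a fixed shield, I would follow its chains of neighbours along the three natural directions determined by its symmetry axes. The local rules should force the chain to be highly structured, yielding a dichotomy: either all encountered shields share a common edge direction, producing a shield line tiling, or the shield orientations organise themselves into a three-fold-symmetric pattern, producing a shield triangle tiling. The delicate point, and the step I expect to be genuinely hardest, is ruling out hybrid tilings that contain patches of both types; I would do this by showing that any interface between a ``line'' region and a ``triangle'' region forces a vertex configuration incompatible with the edge-local rules of Step 2. A connectedness argument then promotes the dichotomy from a single shield to the entire tiling, completing the proof.
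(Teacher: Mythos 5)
Your Step 1 is essentially the paper's Lemma~1: the same angle equation, the same reduction to $b=c$ and $a+4c=6$ in the generic case, and the same plan of checking the finitely many exceptional values of $\alpha$ separately. Two small cautions there: the $(2,1,1)$ solution gives \emph{two} distinct cyclic arrangements (triangles alternating with shields, versus two adjacent triangles next to two adjacent shields), and the distinction between these two is precisely what drives the rest of the argument, so it should be made explicit rather than folded into ``a short list of local patches''; and for the exceptional $\alpha$'s the paper shows each extra configuration dies immediately (it creates a vertex where two shields meet in their $\beta$ angles that cannot be completed), whereas your fallback ``or appear only inside patches already classified'' would leave the generic atlas unproved for those $\alpha$'s and should not be needed.

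The genuine gap is in Step 3. You state the desired dichotomy (common edge direction versus three-fold-symmetric arrangement) and the strategy of excluding hybrids at an interface, but you give no mechanism that actually forces either global structure, and the dichotomy as you phrase it presupposes that ``line regions'' and ``triangle regions'' are the only local possibilities --- which is the theorem. The paper's propagation rests on two concrete ingredients that are absent from your sketch. First, the case split is governed not by shield orientations but by the presence or absence of a \emph{hex} vertex (six triangles around a vertex): with no hex, every pair of adjacent triangles at a fault vertex forces another fault, and this propagates into an infinite \emph{fault line} whose two sides are shield lines of opposite orientation, after which the tiling is reconstructed line by line. Second, in the presence of a hex, one shows a hex is surrounded by shields, that six fault lines emanate from it, and --- the key geometric fact --- that two fault lines cannot cross (a vertex determines the direction of the fault line through it), so fault lines must terminate at hexes, forcing the hexes onto a triangular grid whose cells are filled with bowties. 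Without an argument of this kind, ``the local rules should force the chain to be highly structured'' is an assertion of the conclusion rather than a proof, and your proposed interface analysis has no identified configuration to contradict. To complete the proof you would need to supply the analogue of these fault-line propagation and non-crossing lemmas.
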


For $\alpha=\tfrac{\pi}{2}$ there turn out to be many more shield tilings, and a human-readable classification seems difficult to achieve.
This is discusses in Section~\ref{sec:right}.


\section{Shield line tilings and shield triangle tilings}
\label{sec:shield_tilings}

A {\em shield line} is an infinite stripe of shield tiles, aligned along one of their symetry axis, each intersecting the next one in a vertex that is also shared by two triangles; it comes in two {\em orientations} (Fig.~\ref{fig:shield_lines}).
Shield lines can be stacked one on top of the other to yield a shield tiling called a {\em shield line tiling}.
Varying the orientations yields uncountably many different tilings, all periodic in the direction of the shield lines (Fig.~\ref{fig:shield_line_tilings}).

\begin{figure}[hbt]
\centering
\includegraphics[width=\textwidth]{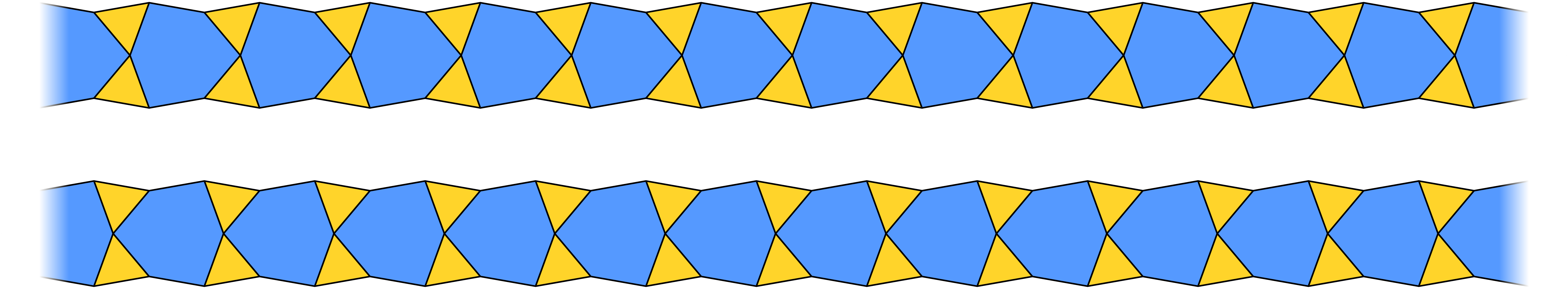}
\caption{Two parallel shield lines with opposite orientations.}
\label{fig:shield_lines}
\end{figure}

\begin{figure}[hbt]
\centering
\includegraphics[width=\textwidth]{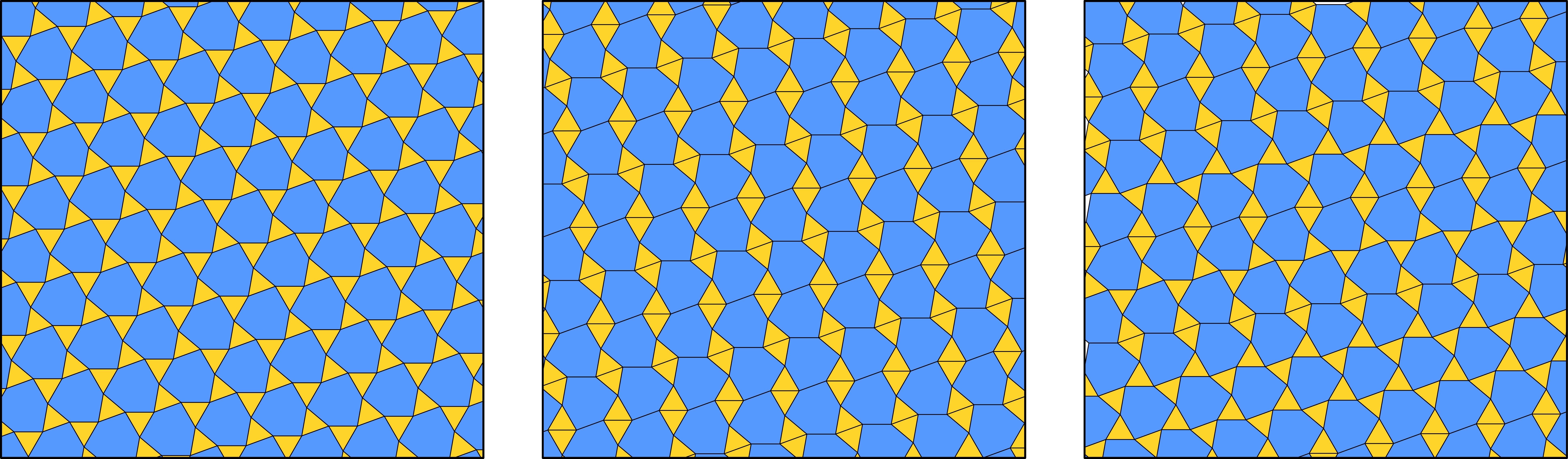}
\caption{
Shield line tilings with uniform, alternating and random orientations.
}
\label{fig:shield_line_tilings}
\end{figure}

A {\em shield triangle of order $k$} is a set of identically-oriented shield tiles centered on a triangular pattern of size $k$ of a triangular grid (whose size is minimal so that the shield are interior disjoint) with triangles completing the pattern as depicted in Fig.~\ref{fig:shield_triangles}.
Such a triangle tile the plane as the triangular grid: this yields what we call a {\em shield triangle tiling of order $k$}.
We also define the shield triangle tiling of {\em infinite order}, that it is obtained as a limit when $k$ tends to infinity.
Fig.~\ref{fig:shield_triangle_tilings} illustrates this.

\begin{figure}[hbt]
\centering
\includegraphics[width=\textwidth]{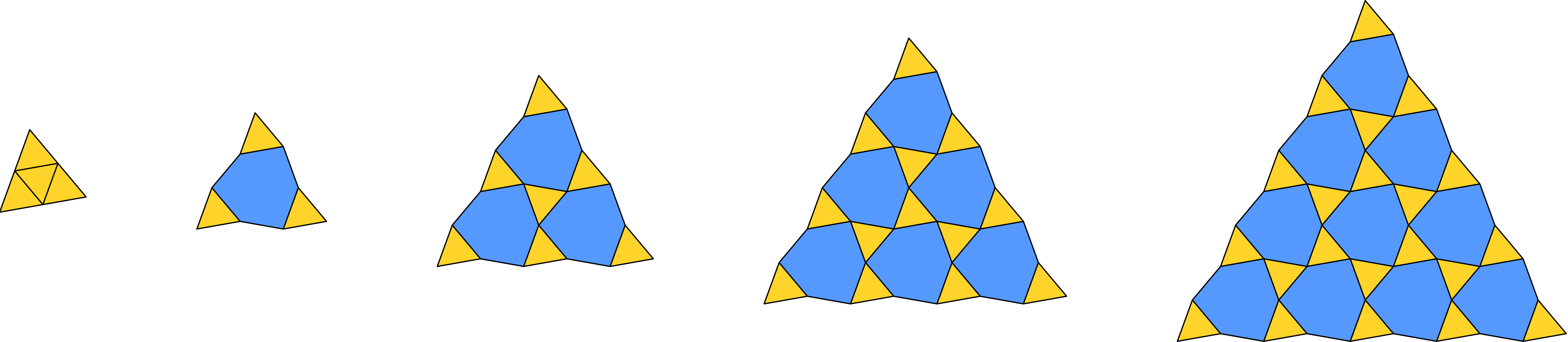}
\caption{Shield triangles of order $0$ through $4$.}
\label{fig:shield_triangles}
\end{figure}

\begin{figure}[hbt]
\centering
\includegraphics[width=\textwidth]{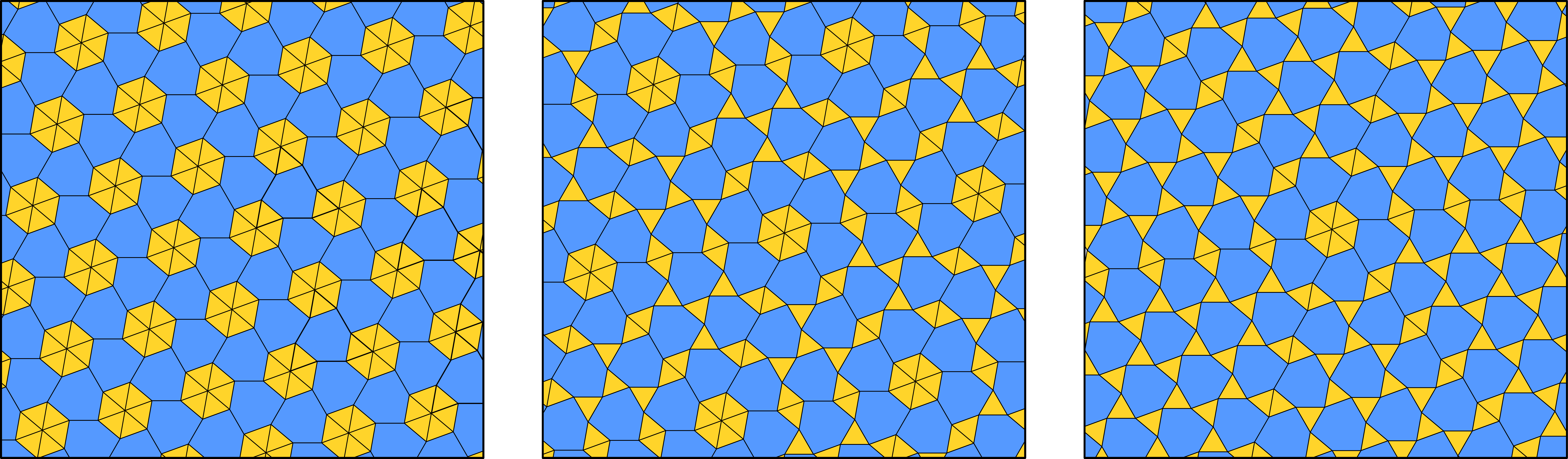}
\caption{Shield triangle tilings of order $1$, $3$ and infinite.}
\label{fig:shield_triangle_tilings}
\end{figure}

\section{Proof of Theorem~\ref{th:main}}
\label{sec:main}

\begin{lemma}
\label{lem:atlas}
For $\alpha\neq \tfrac{\pi}{2}$, the only ways shields and triangles can fit around a vertex are, up to isometry, the three ones depicted in Fig.~\ref{fig:vertex_atlas}.
\end{lemma}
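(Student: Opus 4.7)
The plan is a finite case analysis on the angle-sum condition at a vertex. Let $t$, $a$, $b$ denote the numbers of triangle corners, shield $\alpha$-corners, and shield $\beta$-corners meeting at a given vertex; substituting $\beta=\tfrac{4\pi}{3}-\alpha$ into the angle-sum identity $t\tfrac{\pi}{3}+a\alpha+b\beta=2\pi$ yields
$$(t+4b)\tfrac{\pi}{3}+(a-b)\alpha=2\pi.$$
Since the smallest corner angle is $\pi/3$, we have $t+a+b\le 6$, and since $\beta>\tfrac{2\pi}{3}$, further $b\le 2$.

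First I would handle the generic case $a=b$. The equation collapses to $t+4b=6$, whose only non-negative integer solutions are $(t,a,b)=(6,0,0)$ and $(2,1,1)$. The former gives the six-triangle vertex. The latter gives a cyclic arrangement of four corners $T,T,A,B$; up to rotation and reflection, exactly two orderings exist, according as the two triangles are adjacent or separated, and both close up geometrically with unit edges. Together with the six-triangle vertex, this produces the three configurations of Fig.~\ref{fig:vertex_atlas}.

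The remaining case $a\ne b$ is the delicate one: the equation forces $\alpha=(6-t-4b)\pi/[3(a-b)]$, and a quick enumeration shows that some candidate triples (for example $(t,a,b)=(2,3,0)$, giving $\alpha=\tfrac{4\pi}{9}$) yield values in $(\tfrac{\pi}{3},\tfrac{2\pi}{3})$ other than $\tfrac{\pi}{2}$. Pure angle arithmetic at the central vertex is therefore insufficient, and the argument must import information from neighbouring vertices. Whenever two shield corners sit next to each other at the central vertex, the unit edge they share reaches, one unit away, a second vertex at which the opposite corners of those two shields are forced to meet; the angle-sum equation \emph{there} imposes a new diophantine constraint. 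I would propagate in this way for each surviving candidate triple and derive, from the hypothesis $\alpha\ne\tfrac{\pi}{2}$, a contradiction.

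The main obstacle is precisely this propagation step. It is elementary but must be carried out case by case, tracking how the cyclic ordering at the central vertex determines which corner pairings arise at the neighbouring vertices, and verifying that every such induced constraint is inconsistent with $\alpha\in(\tfrac{\pi}{3},\tfrac{2\pi}{3})\setminus\{\tfrac{\pi}{2}\}$. The bookkeeping, rather than any single conceptual difficulty, is the bulk of the work.
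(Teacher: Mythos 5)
Your proposal is correct and follows essentially the same route as the paper: the same substitution $\beta=\tfrac{4\pi}{3}-\alpha$ reducing the vertex equation to a Diophantine condition, the same split into the case $a=b$ (yielding hex, bowtie and fault) and the finitely many exceptional triples with $a\neq b$. Your mechanism for killing the exceptional triples --- two adjacent shield corners at the central vertex force their opposite ($\beta$) corners to meet at the far end of the shared edge, where the angle sum cannot be completed --- is exactly the paper's one-step argument, so no further propagation is needed.
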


\begin{figure}[hbt]
\centering
\includegraphics[width=\textwidth]{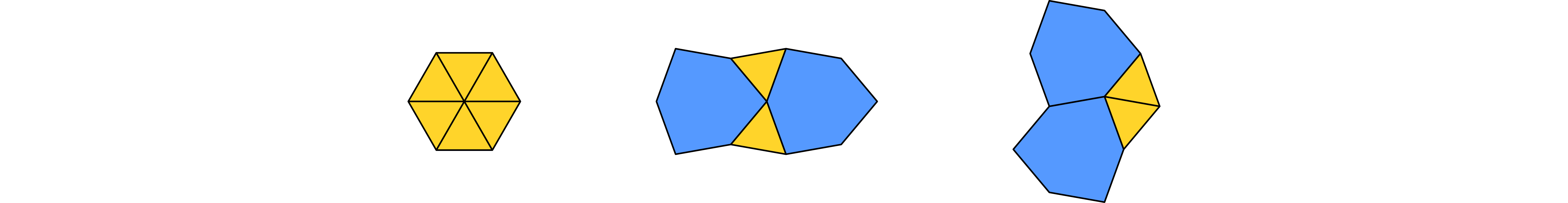}
\caption{The vertex configurations {\em hex}, {\em bowtie} and {\em fault}.}
\label{fig:vertex_atlas}
\end{figure}

\begin{proof}
Every vertex configuration yields natural numbers $p$, $q$ and $r$ such that
$$
p\alpha+q\beta+r\frac{\pi}{3}=2\pi.
$$
Since $\beta=\tfrac{4\pi}{3}-\alpha$ we can rewrite this equation
$$
(p-q)\alpha=2\pi-(4q+r)\frac{\pi}{3}.
$$
If $p=q$, then $4q+r=6$.
For $q=0$ this yields $r=6$: this corresponds to a configuration of six triangles called {\em hex}.
For $q=1$ this yields $r=2$: this corresponds to either to a configuration which alternates two triangles and two shields called {\em bowtie} or to a configuration with two neighbor triangles and two neighbor shield called {\em default}.
For $q\geq 2$ this yields $r<0$, which is impossible.

If $p\neq q$, then the equation can be satisfied only for specific values of $\alpha$.
One has $r\leq 6$, and $\alpha>\tfrac{\pi}{3}$ yields $p<6$ while $\beta>\tfrac{2\pi}{3}$ yields $q<3$.
There are thus finitely many triple $(p,q,r)$ to check.
An exhaustive check shows that, for $\alpha\neq\tfrac{\pi}{2}$, the only cases are those depicted in Fig.~\ref{fig:vertex_atlas_nongeneric}.
None of these exceptional configurations can however appear in a shield tiling because each one yields a vertex where two shields meet in their $\beta$ angle that cannot be completed.
\end{proof}

\begin{figure}[hbt]
\centering
\includegraphics[width=\textwidth]{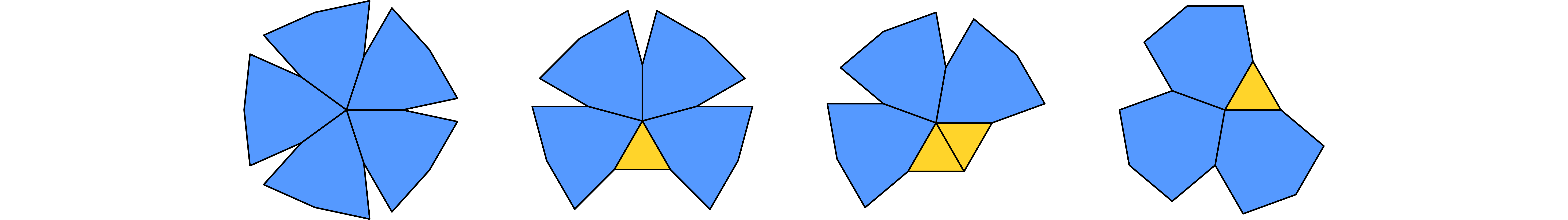}
\caption{
Exceptional vertex configurations for $\alpha=\tfrac{2\pi}{5},\tfrac{5\pi}{12},\tfrac{4\pi}{9},\tfrac{5\pi}{9}$.
}
\label{fig:vertex_atlas_nongeneric}
\end{figure}

\begin{lemma}
\label{lem:nohex}
A shield tiling with no hex is a shield line tiling.
\end{lemma}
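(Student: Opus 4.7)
The plan is to leverage Lemma~\ref{lem:atlas}: with no hex vertex, every vertex of such a tiling is either a bowtie or a fault, and both configurations involve exactly two shields (one contributing its $\alpha$ angle and one contributing its $\beta$ angle) together with two triangles. This very rigid local structure means that for every edge $e$ of every shield $S$, inspecting the vertex configurations at the two endpoints of $e$ — and in particular which angles of $S$ appear there — immediately tells us whether $e$ is shared with another shield (fault on both endpoints) or with a triangle.

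First I would fix an arbitrary shield $S$ and classify each of its six vertices as bowtie or fault. I would show that there is a pair of opposite vertices of $S$, lying on one of its three symmetry axes, at which the configuration is bowtie. At each of these two ``tip'' vertices, the second shield in the bowtie lies directly opposite $S$ along the symmetry axis, and the two triangles fill the wedges on either side. This reproduces exactly the local picture of Fig.~\ref{fig:shield_lines}: $S$ together with its two axial neighbours forms a three-shield segment of a shield line.

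Next I would iterate. The shields $S^{+}$ and $S^{-}$ identified above inherit the same distinguished axis (the symmetry axis through the bowtie vertices they share with $S$), so the same local argument applies to them, extending the segment. Proceeding inductively in both directions yields a biinfinite shield line $\mathcal{L}$ through $S$. To recover the whole tiling, I would argue that the tiles lying immediately above and immediately below $\mathcal{L}$ also organise themselves into shield lines: the edges on the upper boundary of $\mathcal{L}$ come in the zigzag pattern of a shield line, and every shield on the other side of that boundary inherits, by the same vertex classification, its own axial direction parallel to $\mathcal{L}$. Stacking these parallel lines gives the shield line tiling.

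The main obstacle is the case analysis at the six vertices of $S$. The freedom to mix bowtie and fault vertices on a single shield is not entirely trivial to control: one has to show that fault vertices can only occur in the \emph{orthogonal} direction (so that they correspond to two shields of consecutive stacked lines meeting edge-to-edge), never along a direction that would ``bend'' or terminate the emerging shield line. The geometric shape of a fault forces the two shields in it to share an edge, and following the fault vertex at the other endpoint of that edge, together with the bowtie/fault constraint at the neighbouring vertices, should propagate enough rigidity to rule out every distribution of bowtie/fault around $S$ that is incompatible with $S$ sitting on a shield line. This part is essentially diagrammatic and seems to require working through a handful of local pictures rather than a purely algebraic argument.
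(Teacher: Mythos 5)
Your overall strategy is genuinely different from the paper's: you start from an arbitrary shield and try to show it sits on a shield line, whereas the paper starts from a \emph{fault} vertex and propagates it into a fault line (the interface of two oppositely oriented shield lines), then grows the tiling outwards line by line. The sound parts of your outline are the observation that every non-hex vertex is a bowtie or a fault, each containing exactly two shields and two triangles, and the (correct, and provable by a half-angle computation $\tfrac{\alpha}{2}+\tfrac{\pi}{3}+\tfrac{\beta}{2}=\pi$) fact that the two shields of a bowtie are aligned tip-to-tip along a common symmetry axis, which does let a chain of bowties propagate into a shield line.

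The gap is your central claim that every shield $S$ has a pair of \emph{opposite} bowtie vertices, which you defer to ``a handful of local pictures.'' This is not a local statement about $S$ alone. Reformulated in terms of the set $E$ of edges of $S$ shared with other shields (a vertex of $S$ is a fault iff one of its two edges lies in $E$), your claim says $E$ is contained in a single pair of opposite edges. Adjacent edges in $E$ are excluded locally (three shields at one vertex), but a configuration such as $E=\{e_1,e_3\}$ or $E=\{e_1,e_3,e_5\}$ is locally consistent around $S$; excluding it requires propagating each shield--shield edge into a full fault line and then arguing that two bi-infinite fault lines with different directions must cross, which is impossible because a fault vertex determines the direction of its fault line. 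That is exactly the machinery the paper builds (and reuses in Lemma~\ref{lem:hex}), so your route does not avoid it --- it silently presupposes it. A related slip: you infer that an edge with a fault at both endpoints is a shield--shield edge, but a shield--triangle edge can have faults at both endpoints realized through the \emph{other} edges (precisely the $E=\{e_1,e_3\}$ situation), so the endpoint classification does not ``immediately'' determine the nature of an edge. Finally, the stacking step (that the shields beyond the boundary of your line $\mathcal{L}$ organise into a line \emph{parallel} to $\mathcal{L}$, rather than along one of the other two axial directions) is asserted but not argued; the paper handles it by filling the forced $\tfrac{\pi}{3}$ wedges with triangles and splitting into the fault/bowtie dichotomy, which yields the opposite or equal orientation of the next line. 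To make your proof complete you would need to carry out the fault-line propagation and non-crossing arguments explicitly, at which point it essentially becomes the paper's proof in a different order.
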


\begin{proof}
The text refers to the figure above it.

\noindent\includegraphics[width=\textwidth]{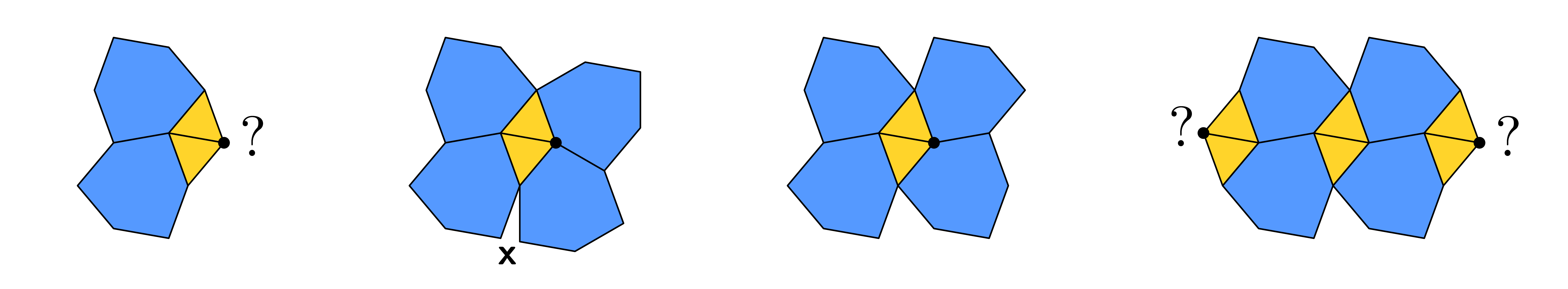}
Assume there is a fault in the tiling (left).
Its two adjacent triangles cannot belong to a hex.
They thus belong to two faults.
There are two ways to place these faults (center), but only one can appear in a tiling (the other has a thin angle that cannot be completed).
A vertex incident to two adjacent shield can only be completed by a fault, so we are brought back to the same question for new pairs of adjacent triangles (right).\\

\noindent\includegraphics[width=\textwidth]{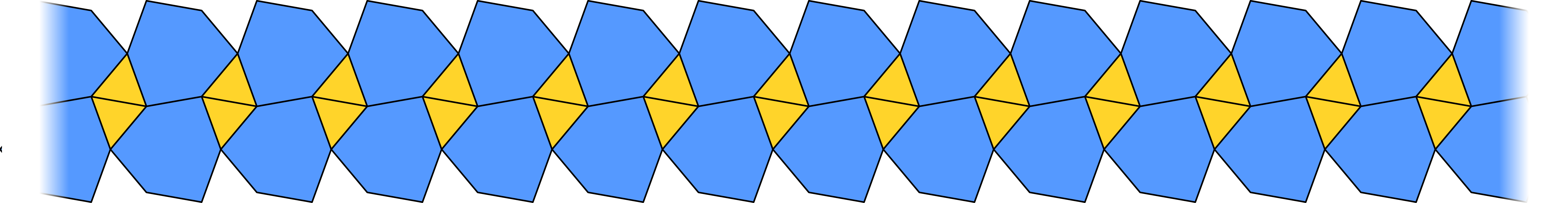}
Repeating the above argument show that the initial fault must belong to a so-called {\em fault line}, that is, the intersection of two adjacent shield lines with opposite orientations.

\noindent\includegraphics[width=\textwidth]{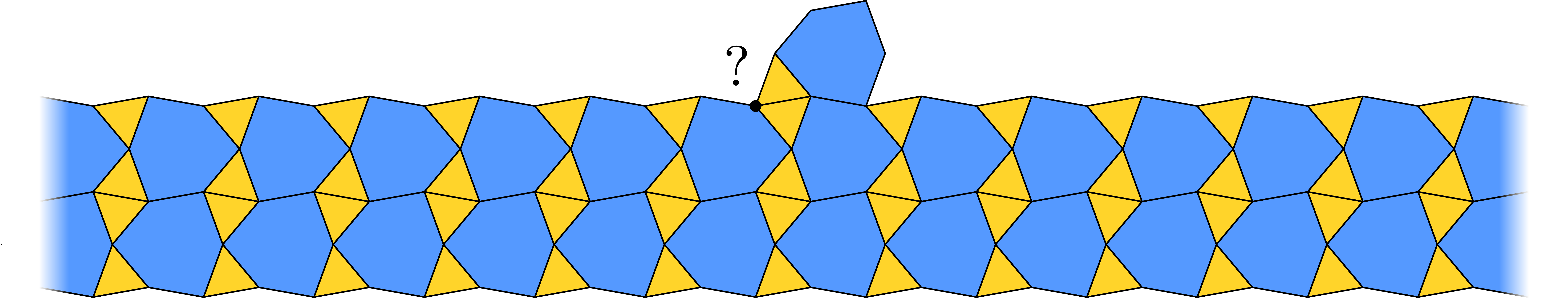}
Vertices with an open angle of $\tfrac{\pi}{3}$ can only be completed by triangles.
Add these triangles.
If any of them is involved in a fault, then the same argument as above applies and yields a new fault line.\\

\noindent\includegraphics[width=\textwidth]{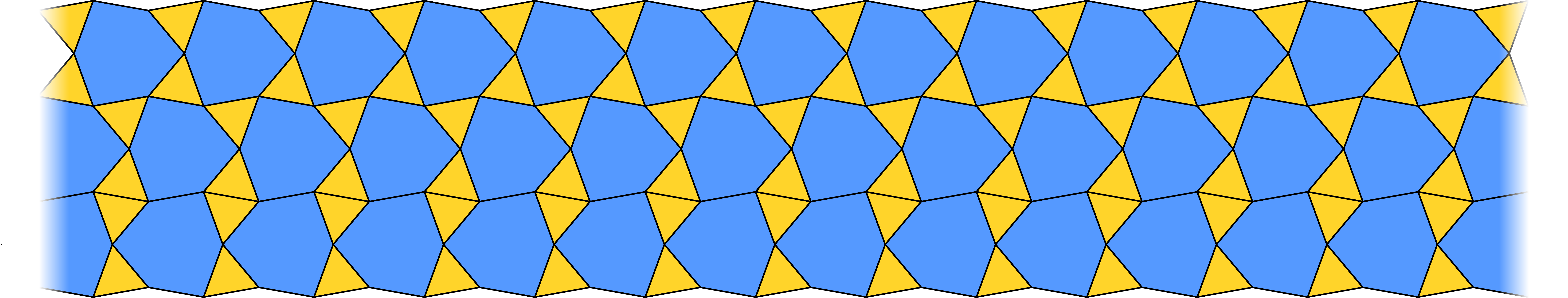}
Otherwise, that is, if all the newly added triangles are involved in a bowtie, then this yields a new shield line oriented as the previous one.\\
Iterating this argument shows that a tiling with a fault but no hex is made of stacked shield lines, hence is a shield line tiling as claimed.
If there is no fault at all (nor hex), then there are only bowties and the only possible tiling is the shield line tiling with uniform orientation (Fig.~\ref{fig:shield_line_tilings}, left).
\end{proof}

\begin{lemma}
\label{lem:hex}
A shield tiling with a hex is a shield triangle tiling.
\end{lemma}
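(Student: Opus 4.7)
The plan is to start from a hex and propagate the structure outward using Lemma~\ref{lem:atlas} as the basic local tool. Let $v$ be the central vertex of the given hex, and $v_1,\ldots,v_6$ its six outer vertices. At each $v_i$, two triangles of the hex meet along an edge, contributing two adjacent angles of $\pi/3$. Among the three configurations allowed by Lemma~\ref{lem:atlas}, the bowtie is excluded because its two triangles sit opposite each other rather than adjacent. Hence each $v_i$ is either a hex or a fault.

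If every outer vertex of the hex is itself a hex, then iterating this observation shows that every vertex of the tiling is a hex, giving the all-triangle tiling, a degenerate shield triangle tiling. Otherwise some $v_i$ is a fault, which forces two shields of angles $\alpha$ and $\beta$ to be placed adjacent to the hex. The heart of the proof is then to show that these shields do not proliferate freely, but instead accumulate into a triangular block of identically-oriented shields with triangles filling the gaps --- precisely a shield triangle of some order $k$. I would carry this out by following the propagation strategy used in the proof of Lemma~\ref{lem:nohex}: each shield, once placed, produces new vertices whose completion, again by Lemma~\ref{lem:atlas}, collapses to essentially one choice when one tracks which angle at which corner of which tile is involved. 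By induction on the size of the block, three hexes are forced at its three corners.

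Applying the same analysis to each corner hex yields a neighbouring shield triangle block, possibly with the opposite orientation or a different order, and iterating in all directions tiles the plane with adjacent shield triangle blocks meeting at their corner hexes --- which is exactly the structure of a shield triangle tiling. The main obstacle will be the case analysis in the inductive step: one must verify carefully that the shields inside a block share a single orientation, that the block is genuinely triangular and closes at three corners, and that the orders of adjacent blocks are forced to match so that the pieces assemble into a single global triangular grid. The degenerate case in which shields propagate indefinitely in some direction without ever meeting a corner hex produces a block of ``infinite order'' and corresponds to the shield triangle tiling of infinite order; this should be handled as the natural limit of the finite-order case.
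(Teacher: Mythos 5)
Your opening step matches the paper: at each outer vertex of a hex two triangles of the hex are adjacent, which rules out the bowtie, so each such vertex is a hex or a fault, and a fault neighbour forces shields around the hex. From that point on, however, your proposal is a plan rather than a proof, and the plan defers exactly the part that carries the mathematical content. You assert that the shields ``accumulate into a triangular block'' and that ``by induction on the size of the block, three hexes are forced at its three corners'', and then you list as ``the main obstacle'' the verification that the block is genuinely triangular, closes at three corners, has identically oriented shields, and that adjacent blocks have matching orders. Those items are not loose ends to be checked later --- they \emph{are} the lemma. Nothing in the proposal identifies a mechanism that forces the triangular shape or the corner hexes, and it is unclear what ``the size of the block'' is an induction parameter on before the block has been shown to exist and to be bounded.

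The mechanism the paper uses, and which is absent from your proposal, is the fault-line argument. By the propagation argument of Lemma~\ref{lem:nohex}, six fault lines emanate from every hex that is not surrounded by hexes. A fault vertex determines the direction of the fault line through it, so no vertex can lie on two fault lines of different directions; hence fault lines cannot cross, and each one either runs forever or terminates at another hex. This pins all the hexes onto the vertices of a triangular grid whose edges are the fault lines; the interior of each grid triangle contains no fault and no hex, so it is filled with bowties, which is exactly a shield triangle of order $k$ (or of infinite order if a hex is unique). In particular the matching of orders of adjacent blocks, which you flag as a worry, comes for free from the grid structure rather than needing a separate argument. Separately, your order-$0$ case needs the all-or-nothing statement that a hex with one hex neighbour has all six neighbours hexes in order for your iteration to go through; the paper obtains this from a short case study, whereas you only established that each neighbour is a hex or a fault.
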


\begin{proof}
If there are only hex, then it is the shield tiling of order $0$.\\

\noindent\includegraphics[width=\textwidth]{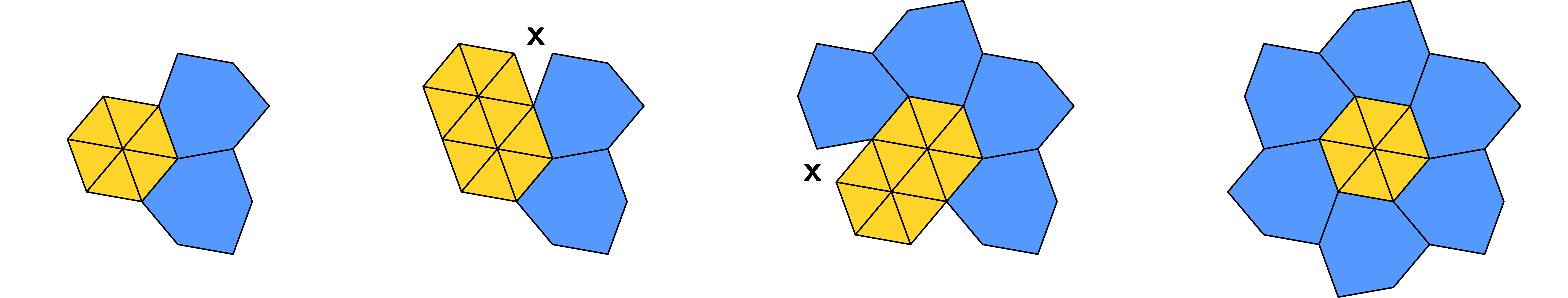}
Otherwise, consider a hex with a neighbor vertex which is not a hex.
This neighbor is necessarily a fault because it is shared by two triangles of the hex (left).
A short case study then shows that the other neighbors cannot be hex, thus are also fault (center).
Hence, any hex is surrounded by shield tiles (right).\\

\noindent\includegraphics[width=\textwidth]{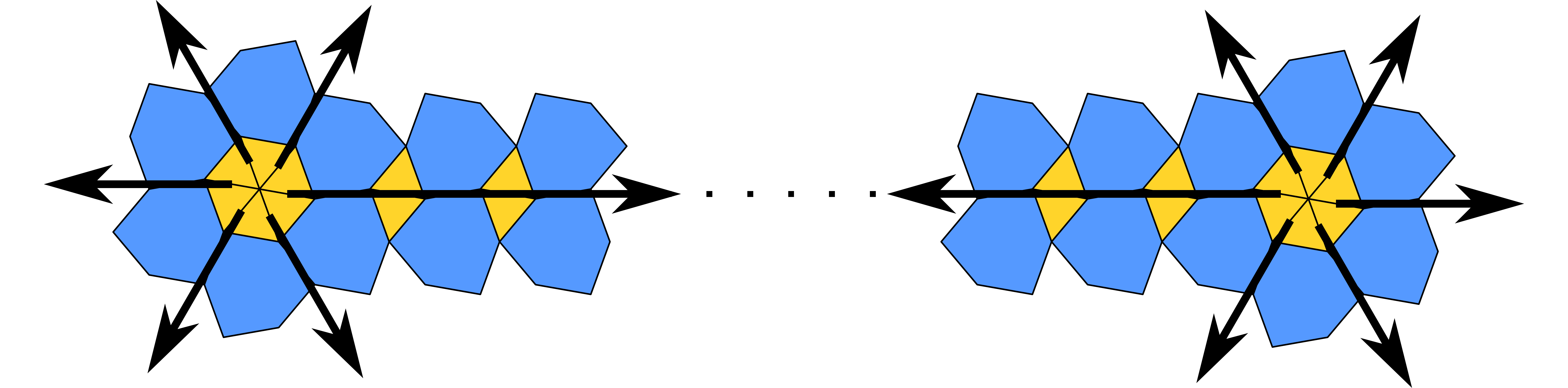}
Now, the same argument as in Lemma~\ref{lem:nohex} shows that six fault lines originate from every such hex.
Each fault line either goes on forever or eventually meets another hex.\\

\noindent\includegraphics[width=\textwidth]{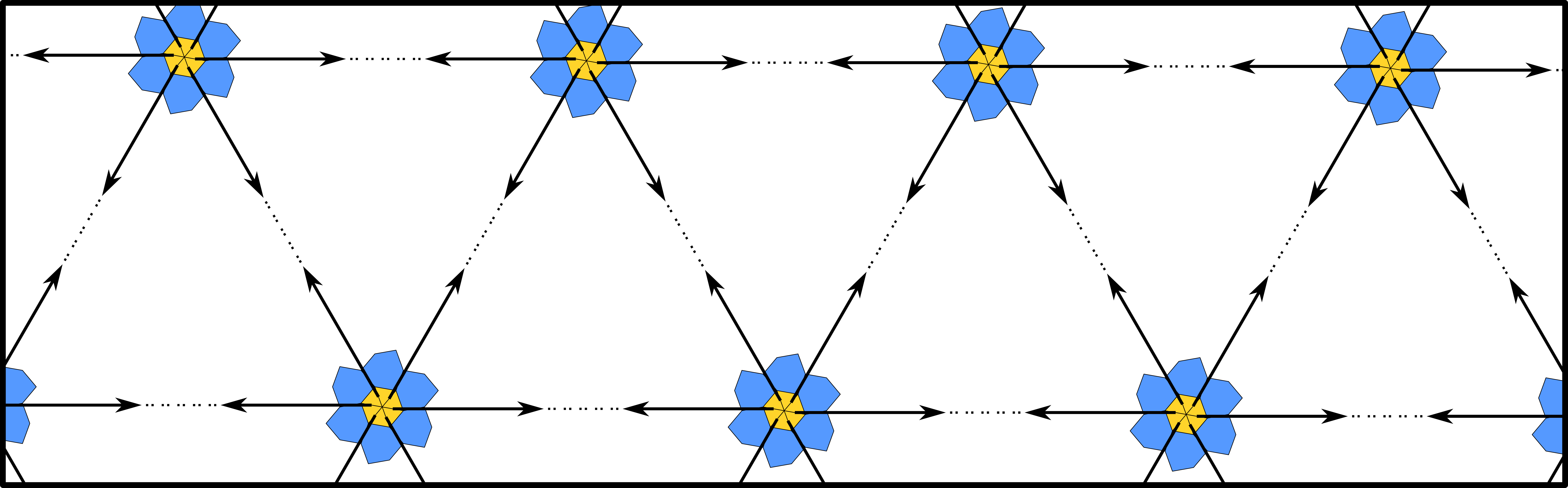}
We have seen in a Lemma~\ref{lem:nohex} that a fault vertex determines the direction of the fault line that contains it.
No vertex can thus be shared by two fault lines with different directions.
In other words, two fault lines cannot cross each other: they have to meet in a hex.
This forces the hex vertices to be arranged on the vertices of a triangular grid as depicted above.

Last, no fault line can originate in or pass through a triangle of this grid (otherwise it would have to cross a fault line on the boundary of the triangle).
There are thus neither fault nor hex vertices inside these triangles: they have all to be filled by bowties.
This yields a $T_k$ (or $T_\infty$ if there is only one hex).
\end{proof}

\noindent Theorem~\ref{th:main} directly follows from Lemmas~\ref{lem:nohex} and \ref{lem:hex}.

\section{Right shields}
\label{sec:right}

For $\alpha=\tfrac{\pi}{2}$, the shield is said to be {\em right}.
The case study of Lemma~\ref{lem:atlas} then yields three exceptional vertex configurations (Fig.~\ref{fig:vertex_atlas_right}).

\begin{figure}[hbt]
\centering
\includegraphics[width=\textwidth]{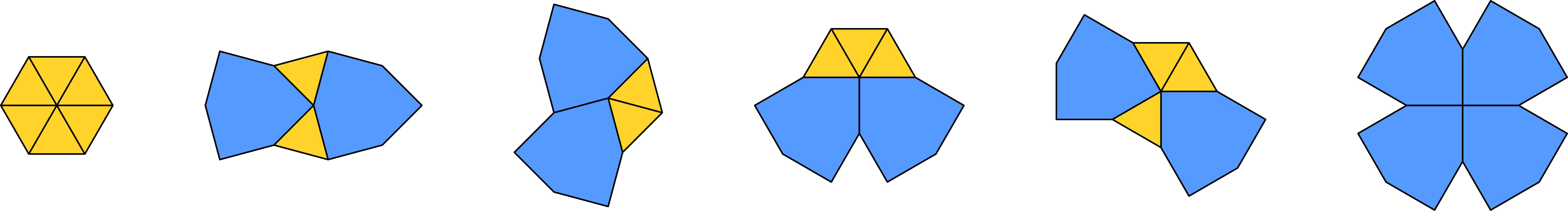}
\caption{Three generic and three exceptional vertex configurations.}
\label{fig:vertex_atlas_right}
\end{figure}

These three exceptional configurations cannot be ruled out as those in Fig.~\ref{fig:vertex_atlas_nongeneric}.
Indeed, consider a packing of regular dodecagons on the triangular grid with holes filled by triangles: a case study shows that every dodecagon can be filled in exactly three different ways (Fig.~\ref{fig:right_shield_tilings}, left).
In particular, this allows tilings where all the $6$ vertex configurations appear (Fig.~\ref{fig:right_shield_tilings}, center).

\begin{figure}[hbt]
\centering
\includegraphics[width=\textwidth]{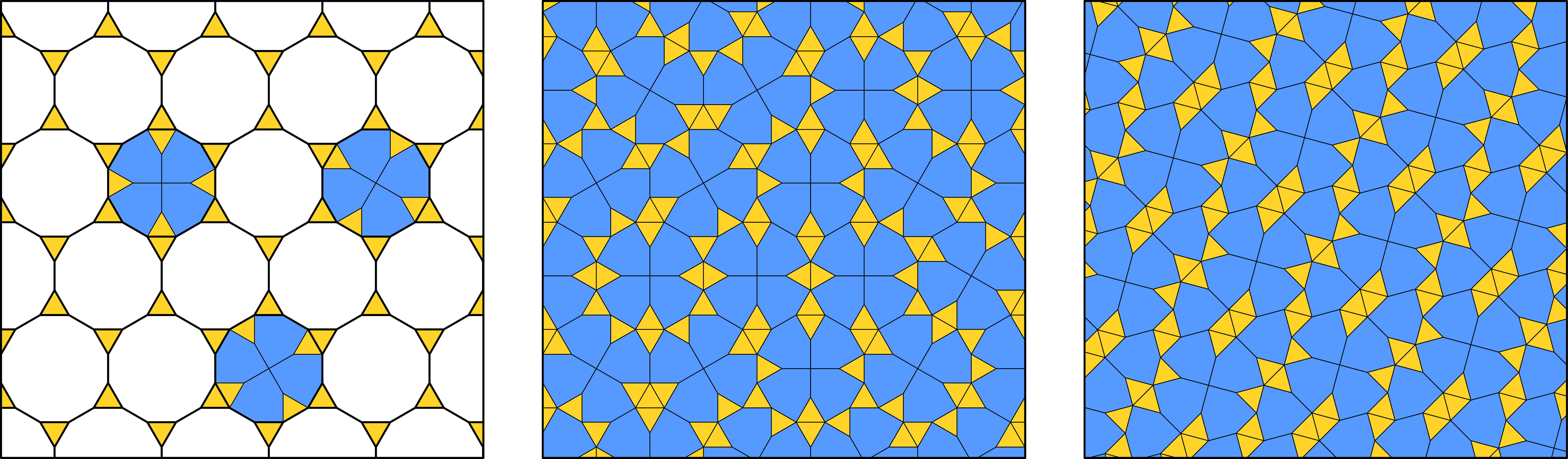}
\caption{
A packing of regular dodecagons and the three different ways to fill each one (left).
A random filling of these dodecagons that contains every possible vertex configuration (center).
Another right shield tiling which cannot be obtained in this way (right).
}
\label{fig:right_shield_tilings}
\end{figure}

These dodecagon-based shield tilings also show that the number $P_n$ of different patterns that can be obtained by taking all the tiles within distance $n$ from some vertex of some shield tiling grows (at least) exponentially fast in $n^2$.
In particular, the {\em entropy} of right shield tilings, defined as the limit superior of $\log(P_n)/n^2$, is positive.
In contrast, the entropy of generic shield tilings is zero because $P_n$ grows only exponentially in $n$.

And yet, the above dodecagon-based shield tilings do not exhaust the subject: there are still other shield tilings that cannot be obtained in this way (see e.g. Fig.~\ref{fig:right_shield_tilings}, right).
Alike the square and triangle tilings mentioned in the introduction, shield tilings may be too ``wild'' to admit a human-readable description\ldots

\bibliographystyle{alpha}
\bibliography{shield}
\end{document}